\documentclass[11pt]{article}

\usepackage[margin=1in]{geometry}
\usepackage{amsmath,amssymb,amsthm,mathtools}
\usepackage{microtype}
\usepackage[colorlinks=true,linkcolor=blue,citecolor=blue,urlcolor=blue]{hyperref}
\usepackage{xspace}

\newtheorem{theorem}{Theorem}[section]
\newtheorem{lemma}[theorem]{Lemma}

\newtheorem{corollary}[theorem]{Corollary}
\theoremstyle{remark}
\newtheorem{remark}[theorem]{Remark}

\newcommand{\F}{\mathbb{F}}
\newcommand{\Z}{\mathbb{Z}}

\newcommand{\Norm}{\operatorname{N}}

\newcommand{\erdos}{Erd\H{o}s\xspace}

\title{An Improved Upper Bound for Colorings Without Symmetrically Colored $k$-Term Arithmetic Progressions}
\author{
  Ruizhe Shi\\
  University of Washington
  \and
  Yiqi Dong\\
  Tsinghua University
}
\date{\today}

\begin{document}

\maketitle

\begin{abstract}
Given a coloring $c$ and an even $k\ge 4$, a nontrivial $k$-term arithmetic progression~($k$-AP) $a,a+d,\ldots,a+(k-1)d$ is called \emph{symmetrically colored} if $c(a+(i-1)d)=c(a+(k-i)d)$, $\forall i\in[k/2]$. Deng, Tidor, and Zhao asked whether $[N]$ admits a coloring with $N^{o(1)}$ colors and no such 4-APs, and gave an $O(N^{\log_{22}3})$-coloring of $[N]$. We give an $O_k(p)$-coloring of $\Z/p^{k^2/4}\Z$ without such $k$-APs for every even $k\ge 4$ and every prime $p>k$, and hence an $O_k(N^{4/k^2})$-coloring of $[N]$, improving the exponent in the upper bound for $4$-APs from $\log_{22}3$ to $1/4$. The construction combines a carry-control coloring of base-$p$ digits with a layered field norm mapping. Together with Behrend-style product colorings, our result for $4$-APs gives $h(N)\leq N^{1/4+o(1)}$ in \erdos's Problem~160 on coloring every nontrivial 4-AP with at least three colors. This result also yields $\rho_4(\alpha)=O_\varepsilon(\alpha^{5-\varepsilon})$ for every $\varepsilon>0$, improving the bound toward Ruzsa's question. Our result for $k$-APs disproves Gowers' conjectured lower bound for all even $k\ge6$ for the \emph{first} time.
\end{abstract}

\maketitle
\pagestyle{plain}

\section{Introduction}

Given a coloring $c$ and an even $k\ge 4$, we say that a $k$-term arithmetic progression~($k$-AP)
\[
    a,\quad a+d,\quad \ldots,\quad a+(k-1)d
\]
is \emph{nontrivial} if $d\neq0$, and \emph{symmetrically colored} if
\[
    c(a+(i-1)d)=c(a+(k-i)d)\qquad \forall i\in[k/2]=\{1,\ldots,k/2\}\;.
\]
Deng, Tidor, and Zhao~\cite[Conjecture~1.6]{DTZ} proposed the following Ramsey-type problem: study colorings of $[N]$ without nontrivial symmetrically colored 4-APs, and conjectured that $N^{o(1)}$ colors suffice.
Their tensor-power construction, based on a three-coloring of $\mathbb Z/22\mathbb Z$, yields
\[
O\left(N^{\log_{22}3}\right)\;,
\]
where $\log_{22}3\approx 0.355$. To the best of our knowledge, this is the strongest previously known upper bound for this problem. Our main result improves this to $O(N^{1/4})$ via an explicit construction.

\begin{theorem}\label{thm:cyclic}
For every even $k\ge 4$ and every prime $p>k$, there is a $(k-1)^{k^2/4-1}p$-coloring of $\Z/p^{k^2/4}\Z$ with no nontrivial symmetrically colored $k$-APs.
\end{theorem}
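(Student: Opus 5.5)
The plan is to write $m=k^2/4=(k/2)^2$, expand elements of $\Z/p^m\Z$ in base $p$, and group the $m$ digits into $k/2$ consecutive blocks of $k/2$ digits. Each block is identified, via a fixed $\F_p$-basis, with the field $\F_q$, $q=p^{k/2}$. The coloring will be a product of two parts.
\begin{itemize}
\item A \emph{carry-control} part, taking $(k-1)^{m-1}$ values. For each digit position except the top one it records a residue class in $[k-1]$ of the digit, or of a short window of digits. Its job is to guarantee that, along a symmetrically colored $k$-AP, the base-$p$ additions $x_i\mapsto y_i$ near the relevant scale have controlled carries.
\item A \emph{layered norm} part, taking $p$ values. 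It is the field norm $\Norm_{\F_q/\F_p}$ of an appropriate block, and which block is used (the ``layer'') is determined by information already fixed by the carry-control colors.
\end{itemize}
The count of colors is then exactly $(k-1)^{m-1}p$.

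For the analysis, take a nontrivial $k$-AP $a+(j-1)d$ and pair $x_i=a+(i-1)d$ with $y_i=a+(k-i)d$. Both elements of a pair have the same sum $s=2a+(k-1)d$, so $x_i=s/2-t_i d$ and $y_i=s/2+t_i d$ with $t_i=(k+1-2i)/2$. Since $p>k$, these $t_i$ are nonzero and their squares are pairwise distinct in $\F_p$.

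Let $v$ be the $p$-adic valuation of $d$; it singles out a block. The first step is to show that the carry-control colors force the following: if all pairs agree on those colors, then in the block containing position $v$ (after a shift by the layer) the block values of $x_i,y_i$ are exactly $u\mp t_i\delta$. Here $u,\delta\in\F_q$, $\delta\neq0$, and there is no interference from carries coming from lower digits. This is what a $(k-1)$-valued digit statistic should buy, since the $k-1$ possible offsets $(2i-1)d$ must be distinguished.

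Given that, equality of norms across all pairs says that the polynomial $P(t)=\Norm(u+t\delta)-\Norm(u-t\delta)\in\F_p[t]$ vanishes at all $k/2$ values $t_i$. Now $P$ is odd of degree at most $k/2$, so $P(t)=t\,g(t^2)$ with $\deg g\le (k/2-1)/2$. Its nonzero roots come in $\pm$ pairs, so it has fewer than $k/2$ roots with distinct squares unless $P\equiv0$.

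The main obstacle is the degenerate case $P\equiv0$. In that case $\Norm(\delta)\prod_\sigma(t+\sigma(u/\delta))$ is even in $t$, meaning that the Galois conjugates of $w=u/\delta$ are closed under negation. This can genuinely happen, for example when $w^2$ lies in a subfield. The block length $k/2$ and the ``layering'' are there to kill this case.

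My first attempt would be to let the layer index record (through the carry colors) enough about $s$ or $d$ that the next block up also yields a norm constraint. That would give two simultaneous constraints, from the block of $v$ and the next one. The aim is to show that conjugate-closure under negation, once forced in every one of the $k/2$ blocks, forces $d\equiv0\pmod{p^m}$. If this is too weak, I would instead make the layer choice depend on which block first has $w\notin\F_p$-degenerate shape, and argue via degrees of subfields of $\F_q$.
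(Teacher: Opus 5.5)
There is a genuine gap. The case you call degenerate is in fact the only case, and the coloring you set up cannot get past it. Since $P$ is odd of degree at most $k/2$ and vanishes at each $t_i$, it vanishes at the $k$ distinct points $\pm t_i$. Hence $P\equiv0$ for every symmetrically colored AP, not only in special configurations.

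When $k\equiv0\pmod 4$ (including $k=4$), $\Norm_{\F_{p^{k/2}}/\F_p}$ has even degree, so $\Norm(-z)=\Norm(z)$. Then every AP whose block values are $\mp t_i\delta$ (i.e.\ $u=0$) is symmetrically colored by this norm, whatever the layer. When $k\equiv 2\pmod 4$, the $t^{k/2}$-coefficient $2\Norm(\delta)\neq0$ does rule out $P\equiv0$, but only for the one block you look at.

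That is the second problem: a norm of a single block of $k/2$ digits, with the block read off from the color of an individual element, cannot know $d$ or where the digit increment lives. Your remaining colors number only $(k-1)^{k^2/4-1}=O_k(1)$. So a Gallai (multidimensional van der Waerden) argument on $k/2+1$ carry-free digit positions produces, for all large $p$, a nontrivial AP $a+ie\,p^{j}$, $0\le i\le k-1$, that moves only one digit $j$ outside the selected block. Your entire coloring is constant on it. Neither fallback plan (two-block constraints, subfield degrees) can be completed with a coloring of this shape. Separately, your first step (exact block values $u\mp t_i\delta$ from the carry colors) is only asserted.

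The missing idea is to put a norm on every digit at once, using blocks of distinct odd sizes. Since $k^2/4=1+3+\cdots+(k-1)$, the paper splits $\mathbf x\in\F_p^{k^2/4}$ into $\mathbf u^{(r)}\in\F_p^{r}$ for odd $r\le k-1$. It colors by the single value $Q(\mathbf x)=\sum_r P_r(\mathbf u^{(r)})\in\F_p$, where $P_r$ is the norm form of $\F_{p^r}/\F_p$: homogeneous of odd degree $r$ with no nontrivial zero. Summing costs no extra colors.

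The carry colors $\tau(x)=\lfloor (k-1)x/p\rfloor$ on digits $0,\dots,k^2/4-2$ force all carries along the AP to agree. So the full digit vectors form an AP $\mathbf y_0+i\mathbf h$ in $\F_p^{k^2/4}$, with no valuation of $d$ or block selection needed. Then $g(t)=Q(\mathbf y_0+t\mathbf h)$ has degree at most $k-1$. The symmetric conditions give $k$ roots of $g(\frac{k-1}{2}+t)-g(\frac{k-1}{2}-t)$, so $g(\frac{k-1}{2}+t)$ is even in $t$. If $r$ is the largest odd index with $\mathbf v^{(r)}\neq0$, the $t^r$-coefficient is $P_r(\mathbf v^{(r)})\neq0$, a contradiction; descending induction gives $\mathbf h=0$.

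Odd degree is what lets a norm detect the reflection $t\mapsto-t$. Distinct degrees are what prevent cancellation between layers: with equal-size blocks, a summed top coefficient $\sum_B\Norm(\delta_B)$ can vanish with every $\delta_B$ nonzero.
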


\begin{corollary}\label{cor:cyclic-4}
For every prime $p\geq 5$, there is a $27p$-coloring of $\Z/p^4\Z$ with no nontrivial symmetrically colored 4-APs.
\end{corollary}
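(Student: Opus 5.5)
The plan is to obtain Corollary~\ref{cor:cyclic-4} as the special case $k=4$ of Theorem~\ref{thm:cyclic}; no new construction is needed. First I check that $k=4$ satisfies the hypotheses of the theorem. It is even and at least $4$. The condition ``prime $p>k$'' becomes ``prime $p>4$'', and for primes this is the same as $p\ge 5$, since $4$ is not prime. So every prime $p\ge5$ allowed in the corollary is allowed in the theorem.

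Next I evaluate the parameters of Theorem~\ref{thm:cyclic} at $k=4$. The modulus exponent is $k^2/4=16/4=4$, so the group is $\Z/p^4\Z$. The number of colors is $(k-1)^{k^2/4-1}p=3^{4-1}p=27p$. The theorem therefore gives a $27p$-coloring of $\Z/p^4\Z$ with no nontrivial symmetrically colored $k$-APs.

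Finally I match the definitions. For $k=4$, symmetric coloring means $c(a+(i-1)d)=c(a+(4-i)d)$ for $i\in[2]$, that is, $c(a)=c(a+3d)$ and $c(a+d)=c(a+2d)$. This is exactly the notion of a symmetrically colored 4-AP, and nontriviality ($d\neq 0$ in $\Z/p^4\Z$) is also the same. So the coloring supplied by the theorem is the one the corollary asks for.

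There is no real obstacle here. The only care needed is to confirm that the strict inequality $p>k$ is compatible with the stated range $p\ge5$, and that the color count simplifies exactly to $27p$ rather than only being bounded by it. All the difficulty sits in Theorem~\ref{thm:cyclic} itself, which presumably combines the carry-control coloring of base-$p$ digits with the layered norm map, and we take that result as given.
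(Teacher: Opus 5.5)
Your proposal is correct and matches the paper, which states this corollary immediately after Theorem~\ref{thm:cyclic} as its $k=4$ case: $k^2/4=4$, $(k-1)^{k^2/4-1}p=3^3p=27p$, and for primes the condition $p>4$ is the same as $p\ge 5$. Your check that the $k=4$ symmetry condition reduces to $c(a)=c(a+3d)$ and $c(a+d)=c(a+2d)$ is fine but not needed beyond this.
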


This has several immediate applications. The first concerns \erdos Problem~160~\cite{Erdos1989}:

\begin{quote}
\emph{Let $h(N)$ be the smallest $k$ such that $[N]$ can be colored with $k$ colors so that every 4-AP contains at least three distinct colors. Estimate $h(N)$.}
\end{quote}

Hunter~\cite{Hunter160} observed that the standard Behrend-style product coloring eliminates all relevant two-color patterns except the symmetric pattern. Combining this observation with our result improves the exponent in the previously known upper bound from $0.355+o(1)$ to $1/4+o(1)$.

\begin{corollary}[An improved upper bound for \erdos~Problem~160]\label{cor:erdos}
One has
\[
h(N)\le N^{1/4+o(1)}\;.
\]
\end{corollary}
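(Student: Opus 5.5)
The plan is to take a product of two colorings: the coloring from Corollary~\ref{cor:cyclic-4}, transferred to $[N]$, which handles the symmetric patterns, and a Behrend-style coloring (Hunter's observation), which handles the rest.

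\emph{Step 1 (reduction to patterns).} A 4-AP $(x_1,x_2,x_3,x_4)$ with at most two colors induces a partition of $\{1,2,3,4\}$ into at most two blocks. Call a partition \emph{symmetric} if $1,4$ lie in a common block and $2,3$ lie in a common block. The symmetric partitions are exactly $\{1234\}$ and $\{14\}\{23\}$. A coloring is constant on the blocks of a symmetric partition precisely when the AP is symmetrically colored. The remaining two-block partitions are:
\begin{itemize}
\item the "three plus one" types, such as $\{123\}\{4\}$ and $\{124\}\{3\}$;
\item $\{12\}\{34\}$;
\item $\{13\}\{24\}$.
\end{itemize}

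\emph{Step 2 (Behrend-style factor).} I will invoke Hunter's observation~\cite{Hunter160} in the following form. There is a coloring $c_B$ of $[N]$ with $N^{o(1)}$ colors such that, for every \emph{nonsymmetric} partition $P$ with at most two blocks, no nontrivial 4-AP has $c_B$ constant on each block of $P$. Each such $P$ amounts to a system of linear equations in $a,d$ over the same-colored elements. Behrend's sphere construction, applied with several digit bases and taking a product over the finitely many patterns, kills each such system with $N^{o(1)}$ colors. The symmetric pattern is the one this method cannot handle, because it contains the trivial solution structure $x_1+x_4=x_2+x_3$ for every AP. I take this step as given from the cited observation, and I expect verifying it to be the main technical point if one had to redo it.

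\emph{Step 3 (symmetric factor).} By Bertrand's postulate, choose a prime $p\ge5$ with $N^{1/4}\le p\le 2N^{1/4}+5$, so that $p^4\ge N$. Apply Corollary~\ref{cor:cyclic-4} and define $c_S$ on $[N]$ by reducing modulo $p^4$. This gives at most $27p=O(N^{1/4})$ colors. A nontrivial 4-AP in $[N]$ has common difference $0<|d|<N\le p^4$, so it maps to a nontrivial 4-AP in $\Z/p^4\Z$. Hence no nontrivial 4-AP in $[N]$ is symmetrically colored under $c_S$.

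\emph{Step 4 (product).} Let $c=(c_B,c_S)$, which uses $N^{o(1)}\cdot O(N^{1/4})=N^{1/4+o(1)}$ colors. Suppose a nontrivial 4-AP receives at most two $c$-colors, and let $P$ be its color partition. Both $c_B$ and $c_S$ are constant on each block of $P$.
\begin{itemize}
\item If $P$ is nonsymmetric, this contradicts the property of $c_B$ from Step~2.
\item If $P$ is symmetric, then $c_S$ symmetrically colors the AP, contradicting Step~3.
\end{itemize}
Thus every 4-AP contains at least three colors, giving $h(N)\le N^{1/4+o(1)}$.
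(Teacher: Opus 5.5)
Your proposal is correct and matches the paper's proof: take the product of the $O(N^{1/4})$-coloring transferred from Corollary~\ref{cor:cyclic-4} (the paper packages your Step~3 as Corollary~\ref{cor:interval} with $k=4$) with $N^{o(1)}$-colorings that kill every non-symmetric pattern using at most two colors. The only difference is that you take Hunter's observation as a black box, whereas the paper proves it: it uses \cite[Lemma~3.2]{DTZ} for $AAAA$ and the three-plus-one patterns, \cite[Lemma~7.11]{DTZ} for $ABAB$, and its own carry-controlled sum-of-squares coloring (Lemma~\ref{lem:behrend-coloring-3}) for $AABB$.
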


Second, let $\rho_4(\alpha)$ denote the minimum asymptotic density of 4-APs in a Fourier-uniform set of density $\alpha$, in the sense of~\cite[Definition~1.3]{DTZ}. For each $k\ge 6$, let $\rho_k(\alpha)$ denote the minimum asymptotic density of $k$-APs in a $U^{k-2}$-uniform set of density $\alpha$, in the sense of~\cite[Definition~1.11]{DTZ}. Ruzsa~\cite[Problem~3.2]{CrootLev} asked whether $\rho_4(\alpha)$ decays faster than every power of $\alpha$. Combining the reduction of \cite{DTZ} with our result improves the exponent in the previously known upper bound from $4.406$ to arbitrarily close to $5$.

\begin{corollary}[An improved upper bound for Ruzsa's question]\label{cor:rho} 
For every $\varepsilon>0$, one has
\[
    \rho_4(\alpha)
    = O_\varepsilon\bigl(\alpha^{5-\varepsilon}\bigr)
    \qquad (0<\alpha<1/2)\;.
\]
\end{corollary}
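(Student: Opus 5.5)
The plan is to feed Corollary~\ref{cor:cyclic-4} into the reduction of Deng, Tidor, and Zhao~\cite{DTZ} and then let $p\to\infty$. In~\cite{DTZ}, Fourier-uniform sets with few 4-APs are built from colorings of a cyclic group with no nontrivial symmetrically colored 4-APs. Roughly, the ambient group is a power $(\Z/M\Z)^n$. The set is a union, over colors, of pieces on which a "color-indexed" quadratic-type condition is imposed. A 4-AP inside the set then forces its endpoints and its middle pair to share colors, i.e.\ to be symmetrically colored, and so the progression must be degenerate in the relevant coordinate. Tensoring over $n$ coordinates produces a density/count tradeoff governed by $\log M/\log r$, where $r$ is the number of colors. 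I will take the precise statement of that reduction as given. I have not pinned down its exact form, so I only record its qualitative shape: an $r$-coloring of $\Z/M\Z$ with no nontrivial symmetrically colored 4-APs gives $\rho_4(\alpha)\le C\,\alpha^{E(M,r)}$ for all $0<\alpha<1/2$. The exponent $E(M,r)$ is increasing in $\log M/\log r$. With $(M,r)=(22,3)$ it yields their exponent $4.406$, and it tends to $5$ as $\log M/\log r\to 4$.

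First, I would fix $\varepsilon>0$. By Corollary~\ref{cor:cyclic-4}, for each prime $p\ge5$ we have $M=p^4$ and $r=27p$, so
\[
\frac{\log M}{\log r}=\frac{4\log p}{\log p+\log 27}\longrightarrow 4 \qquad (p\to\infty).
\]
Next, I would choose a single prime $p=p(\varepsilon)$ large enough that $E(p^4,27p)\ge 5-\varepsilon$. Finally, I would apply the reduction once with this fixed coloring. The constant $C$ depends only on $p$, hence only on $\varepsilon$, which gives the $O_\varepsilon$ form $\rho_4(\alpha)=O_\varepsilon(\alpha^{5-\varepsilon})$ uniformly for $0<\alpha<1/2$.

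The main obstacle is checking that the hypotheses of the DTZ reduction are exactly what Corollary~\ref{cor:cyclic-4} supplies, and reading off the limiting exponent correctly. Three points need checking.
\begin{itemize}
\item The reduction should work for colorings of $\Z/M\Z$ with arbitrary modulus. If it requires odd $M$ or $\gcd(M,6)=1$, this is satisfied because $p\ge5$.
\item It may require that no nontrivial symmetrically colored 4-AP appears even among progressions with $4d\equiv0$ or with repeated terms. These degenerate cases should be excluded automatically, since $p^4$ is coprime to $6$, so $2d,3d\ne0$ whenever $d\ne0$.
\item Intermediate values of $\alpha$ must be covered. The reduction applied to $(\Z/M\Z)^n$ gives densities of the form roughly $r^{-n}$ up to constants. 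To cover all $\alpha$, I would use monotonicity of $\rho_4$, or take unions or subsets at density scale, losing only a constant factor depending on $p$.
\end{itemize}
I would also double-check the exponent by plugging $(22,3)$ into the reduction and confirming that it returns $4.406$, and confirm that the limit as $\log M/\log r\to4$ is exactly $5$. If instead the formula involves $\log M/\log r$ in a different normalization, the choice of $p(\varepsilon)$ would simply be adjusted accordingly.
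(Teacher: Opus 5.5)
Your proposal is correct and follows the paper's argument: plug $M=p^4$, $r=27p$ from Corollary~\ref{cor:cyclic-4} into the Deng--Tidor--Zhao reduction~\eqref{eq:dtz-reduction}, whose exponent is exactly $3+\frac12\log_r M$ (this matches the shape you inferred, including $4.406$ at $(22,3)$), giving $3+\frac{2\log p}{\log p+\log 27}\to 5$, and then fix a prime $p=p(\varepsilon)$ large enough that the exponent is at least $5-\varepsilon$. Your three side checks are not needed, because the cited reduction already gives the bound for all $0<\alpha<1/2$ with a constant depending only on $(r,M)$, hence only on $\varepsilon$.
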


Gowers~\cite[Conjecture 4.2]{Gowers} conjectured that $\rho_k(\alpha)$ is lower bounded by the random baseline $\alpha^k$ for every even $k\ge 4$. Our result provides, to the best of our knowledge, the first counterexample to this conjecture for every even $k\ge 6$, while Gowers~\cite[Theorem~6]{Gowers2020AUS} had previously disproved it for $k=4$.

\begin{corollary}
\label{cor:gowers}
Gowers' conjectured lower bound~\cite[Conjecture~4.2]{Gowers} is false for every even integer
$k\ge4$. 
\end{corollary}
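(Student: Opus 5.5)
The plan is to derive Corollary~\ref{cor:gowers} from Theorem~\ref{thm:cyclic} through the reduction of Deng, Tidor, and Zhao~\cite{DTZ}, the same route that gives Corollary~\ref{cor:rho}. Recall the shape of that reduction: if $\Z/M\Z$ has an $r$-coloring with no nontrivial symmetrically colored $k$-APs, then one builds a $U^{k-2}$-uniform set (Fourier-uniform when $k=4$) of density $\alpha$. It is a random union of colour classes, lifted through a tensor/product construction. Its $k$-AP density is at most $\alpha^{e}$ up to constants, where the exponent $e=e_k(M,r)$ increases with $\log M/\log r$. The reason is that every surviving $k$-AP must be non-symmetrically coloured, hence must use "many" independent colour choices. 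I will take this reduction (\cite[Definition~1.11]{DTZ} and the accompanying theorem) as a black box and record the exponent it gives in terms of $\log M/\log r$.

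\textbf{Step 1.} Apply Theorem~\ref{thm:cyclic} with $M=p^{k^2/4}$ and $r=(k-1)^{k^2/4-1}p$. Then
\[
\frac{\log M}{\log r}=\frac{(k^2/4)\log p}{\log p+(k^2/4-1)\log(k-1)}\longrightarrow \frac{k^2}{4}\qquad(p\to\infty).
\]
So for every $\varepsilon>0$ a large enough prime $p$ brings the ratio within $\varepsilon$ of $k^2/4$.

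\textbf{Step 2.} Substitute into the DTZ exponent to obtain $\rho_k(\alpha)=O_\varepsilon(\alpha^{e_k-\varepsilon})$, where $e_k$ is the limiting exponent. For $k=4$ this is Corollary~\ref{cor:rho}, giving $e_4=5>4$; the case $k=4$ is also already covered by Gowers~\cite[Theorem~6]{Gowers2020AUS}.

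\textbf{Step 3.} Compare with Gowers' conjectured bound $\rho_k(\alpha)\gtrsim\alpha^k$~\cite[Conjecture~4.2]{Gowers}. Once $e_k-\varepsilon>k$, we get $\rho_k(\alpha)/\alpha^k\to0$ as $\alpha\to0$, which contradicts the conjecture for each even $k\ge4$.

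The main obstacle is Step~2: checking that the DTZ exponent, evaluated at ratio $k^2/4$, really exceeds $k$ for all even $k\ge6$. That requires tracing exactly how their reduction converts the ratio $\log M/\log r$ into a power of $\alpha$ for general $k$. This includes any loss from the $U^{k-2}$-uniformity requirement and from the random selection of colour classes. The ratio $k^2/4$ grows quadratically while the target exponent $k$ grows only linearly, so there is substantial slack: already at $k=6$ we have $9$ against $6$. I therefore expect any affine dependence of the exponent on $\log M/\log r$, like the one that produces exponent $5$ from ratio $4$ when $k=4$, to suffice. If the exact form of the DTZ exponent is less favourable, the fallback is to use that a random $\alpha$-fraction of the colour classes kills each non-symmetric $k$-AP with probability at most $\alpha^{j}$, where $j$ is the number of distinct colours it uses. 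One would then bound the total count directly using $r^{k^2/4}\approx M$.
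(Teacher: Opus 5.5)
Your route is the paper's: feed the coloring of Theorem~\ref{thm:cyclic} into the Deng--Tidor--Zhao reduction, let $p\to\infty$, and compare with $\alpha^k$. Steps~1 and~3 are fine. The gap is that Step~2, the only substantive step of the corollary, is left as an expectation, so the argument as written is conditional.

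The heuristic you offer does not establish Step~2. Whether an exponent depending affinely on $\log_r M$ exceeds $k$ at $\log_r M\approx k^2/4$ depends entirely on the intercept and the slope; a slope of order $1/k^2$ would fail. So ``quadratic beats linear'' proves nothing without the actual formula, and ``$9$ against $6$'' is not the relevant comparison. Nor can you extrapolate the slope $\tfrac12$ of the Fourier-uniform bound used for $k=4$: the general-$k$ reduction has slope $1/(k-1)$, and at $k=4$ it gives only $4+\tfrac13$.

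Your fallback cannot work at all. If a random $\alpha$-fraction of the color classes is kept, a $k$-AP using $j\le k$ distinct colors survives with probability $\alpha^{j}\ge\alpha^{k}$. Such a set therefore has at least the random-baseline $k$-AP density, which is the wrong direction for disproving the conjecture. It also does not by itself yield a $U^{k-2}$-uniform set.

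What closes the argument is the explicit form of the reduction, \cite[Proposition~2.4 and Theorem~2.8]{DTZ}: an $r$-coloring of $\Z/M\Z$ with no nontrivial symmetrically colored $k$-AP gives $\rho_k(\alpha)=O_{r,M,k,\varepsilon}\bigl(\alpha^{k-1+\log_r M/(k-1)-\varepsilon}\bigr)$. The exponent exceeds $k$ exactly when $\log_r M>(k-1)(1+\varepsilon)$, so the true comparison is $k^2/4$ against $k-1$. By your Step~1, the exponent tends to $k-1+\frac{k^2}{4(k-1)}-\varepsilon=k+\frac{(k-2)^2}{4(k-1)}-\varepsilon$ as $p\to\infty$. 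Fix $\varepsilon<\frac{(k-2)^2}{4(k-1)}$ and then take a sufficiently large prime $p$; this gives an exponent above $k$ for every even $k\ge4$. The margin is only $\frac{(k-2)^2}{4(k-1)}$ (e.g.\ $0.8$ at $k=6$), far thinner than your heuristic suggests. This computation is exactly the paper's proof.
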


\section{From $k$-APs to digitwise $k$-APs}

Fix an even $k\ge 4$, and a prime $p>k$. For $x\in\{0,1,\dots,p-1\}$, define
\[
\tau(x):=\left\lfloor\frac{(k-1)x}{p}\right\rfloor\in\{0,1,\ldots,k-2\}\;.
\]
Thus $\tau$ divides $\{0,1,\dots,p-1\}$ into $k-1$ consecutive intervals, each of diameter less than $p/(k-1)$.

\begin{lemma}\label{lem:one-digit}
Let $x_0,x_1,\ldots,x_{k-1}\in\{0,1,\dots,p-1\}$ satisfy
\[
    x_{i+1}\equiv x_i+s\pmod p
    \qquad (i=0,1,\ldots,k-2)
\]
for some $s\in\{0,\dots,p-1\}$. For $i=0,1,\ldots,k-2$, define the carry
\[
    e_i:=\mathbf 1_{\{x_{i+1}<x_i\}}\;.
\]
If $\tau(x_{i-1})=\tau(x_{k-i})$ for every $i\in[k/2]$, then $e_0=e_1=\ldots=e_{k-2}$.
\end{lemma}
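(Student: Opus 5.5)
The plan is to write the carries into the digit recursion. Since $0\le x_i+s<2p$, we have
\[
x_{i+1}=x_i+s-p\,e_i\qquad(i=0,\dots,k-2),
\]
and $e_i=1$ exactly when $x_i+s\ge p$. For each $i\in[k/2]$ put $m_i:=k-2i+1$ (odd, with $m_{k/2}=1$ and $m_1=k-1$) and $E_i:=e_{i-1}+e_i+\dots+e_{k-i-1}$, the number of carries strictly inside the symmetric window from $x_{i-1}$ to $x_{k-i}$. Summing the recursion over the window gives
\[
x_{k-i}-x_{i-1}=m_i s-pE_i .
\]
The hypothesis $\tau(x_{i-1})=\tau(x_{k-i})$ puts both endpoints in one $\tau$-interval, so $|m_is-pE_i|<p/(k-1)$ for every $i$. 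The windows are nested, and $E_i-E_{i+1}=e_{i-1}+e_{k-i-1}$. I would show all carries are equal by induction from the innermost window outward.

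First I handle the innermost window and a reduction. If $s=0$, all $e_i=0$ and we are done. For $i=k/2$ the window is the single step $x_{k/2-1}\to x_{k/2}$, and the bound gives $|s-p\,e_{k/2-1}|<p/(k-1)$. So either $s<p/(k-1)$ and $e_{k/2-1}=0$, or $s>p(k-2)/(k-1)$ and $e_{k/2-1}=1$. The two cases should be interchanged by the reflection $x\mapsto p-1-x$ together with $s\mapsto p-s$. This map preserves the progression structure, turns each carry $e_i$ into $1-e_i$ (for $s\neq0$), and satisfies $\tau(p-1-x)=k-2-\tau(x)$; I would check this identity directly from the floor definition, using $p>k$ prime to exclude boundary coincidences. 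Hence it preserves the symmetric-$\tau$ hypothesis. So it suffices to treat $0<s<p/(k-1)$ and prove that every $e_i=0$.

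Now take $0<s<p/(k-1)$. The total displacement is $(k-1)s<p$, so the whole sequence wraps at most once: $E_1\le1$. Suppose some carry is $1$. Let $i$ be the largest index whose window contains it, so $E_{i+1}=0$ and exactly one of $e_{i-1},e_{k-i-1}$ equals $1$. Then $x_{k-i}=x_{i-1}+m_is-p$, and the window bound forces $m_is>p(k-2)/(k-1)$. On the other hand, the inner window $x_i,\dots,x_{k-i-1}$ is carry-free and its endpoints share a $\tau$-value, so $(m_i-2)s<p/(k-1)$. The wrap also pins down where the points lie: if the wrap is at the left end, then $x_{i-1}\ge p-s$ is in the top interval and $x_i<s$; the right-end case is symmetric. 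Thus $x_{i-1}$ lies in the top $\tau$-interval or $x_{k-i}$ lies in the bottom one, while the other endpoint has been pushed to the opposite side. I want to combine these facts into a contradiction.

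For $k\ge6$ and $m_i\ge3$ the crude estimate already suffices: $m_is<\frac{m_i}{m_i-2}\cdot\frac{p}{k-1}\le\frac{3p}{k-1}\le\frac{p(k-2)}{k-1}$. (The case $m_i=1$ is the base case above.) The main obstacle is the tight case, essentially $k=4$ with $m_i=3$, where these diameter bounds meet exactly. There I would replace the diameter bound with the exact interval structure. With $x_{i-1}\ge p-s$ and $\tau(x_{i-1})=k-2$, the common interval of $x_{i-1}$ and $x_{k-i}$ must be the top one, so $x_{k-i}\ge\lceil p(k-2)/(k-1)\rceil$. But $x_{k-i}=x_i+(m_i-1)s<m_is\le(k-1)s$, and the interior endpoints $x_i,x_{k-i-1}$ share a $\tau$-interval whose lower end is at most $x_i<s<p/(k-1)$, i.e.\ the bottom one. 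I expect that comparing these integer ranges gives the contradiction, and I would work this case out first.
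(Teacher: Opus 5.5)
\textbf{There is a genuine gap in your reduction to the small-step case.} The identity $\tau(p-1-x)=k-2-\tau(x)$ is false, and the failure is not a boundary coincidence. The cut points $jp/(k-1)$ are symmetric under $x\mapsto p-x$, not under $x\mapsto p-1-x$, and $x\mapsto p-x$ does not map $\{0,\dots,p-1\}$ to itself.

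Take $k=4$, $p=11$. The $\tau$-classes are $\{0,1,2,3\}$, $\{4,\dots,7\}$, $\{8,9,10\}$, and $\tau(3)=0$ but $\tau(7)=1\neq 2$. So the hypothesis is not transported. The sequence $(x_0,\dots,x_3)=(3,2,1,0)$ with $s=10$ is symmetrically $\tau$-colored, since all its $\tau$-values are $0$. Its reflection $(7,8,9,10)$ with $s'=1$ has $\tau$-pattern $(1,2,2,2)$, so $\tau(x_0')\neq\tau(x_3')$, and your small-step argument cannot be applied to it.

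The repair is the one the paper uses: reverse the order, setting $x_i':=x_{k-1-i}$ and $s':=p-s$. This uses the same points, hence the same $\tau$-values and the same symmetric pairs $\{i-1,k-i\}$. For $s\neq 0$ it gives $e_i'=1-e_{k-2-i}$, so the $e_i'$ are all equal if and only if the $e_i$ are.

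With that fix, the rest of your argument is sound. The tight case you left open closes in one line for every $k\ge 4$, which also makes your separate crude estimate for $k\ge 6$ unnecessary.

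\begin{itemize}
\item If the wrap is at the left end, $x_i<s<p/(k-1)$ puts the inner pair $x_i,x_{k-i-1}$ in the bottom interval. Since $e_{k-i-1}=0$, you get $x_{k-i}=x_{k-i-1}+s<2p/(k-1)\le (k-2)p/(k-1)$. This contradicts $\tau(x_{k-i})=\tau(x_{i-1})=k-2$.
\item If the wrap is at the right end, the inner pair lies in the top interval. Then $x_{i-1}=x_i-s>(k-3)p/(k-1)\ge p/(k-1)$, which contradicts $\tau(x_{i-1})=\tau(x_{k-i})=0$.
\end{itemize}

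This is the same mechanism as the paper's proof, which states it without carry counting. A step smaller than $p/(k-1)$ moves a point into the same or a cyclically adjacent $\tau$-interval. Since $j-1\not\equiv j+1\pmod{k-1}$, the symmetric pairs are forced from the middle outward into a single interval, which rules out any carry. Your $E_i$ bookkeeping reaches the same conclusion, less directly.
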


\begin{proof}
If $s<(k-2)p/(k-1)$, then, since $\tau(x_{k/2-1})=\tau(x_{k/2})$, necessarily $s<p/(k-1)$. If $s\geq(k-2)p/(k-1)$, we instead consider the reversed order $(x_0',x_1',\ldots,x_{k-1}')=(x_{k-1},x_{k-2},\ldots,x_0)$ with $s'=p-s<p/(k-1)$, and 
\[
    e_i':=\mathbf 1_{\{x_{i+1}'<x_i'\}}=1-\mathbf 1_{\{x_{k-1-i}<x_{k-2-i}\}}\qquad (i=0,1,\ldots,k-2)\;.
\]
Thus it suffices to consider the case $s<p/(k-1)$.

Let $\tau(x_{k/2-1})=\tau(x_{k/2})=i$. Since the step is less than $p/(k-1)$, the point $x_{k/2-2}$ lies either in interval $i$ or $i-1$, while $x_{k/2+1}$ lies either in interval $i$ or $i+1$ (interval indices understood modulo $k-1$). As $\tau(x_{k/2-2})=\tau(x_{k/2+1})$, they must both lie in the same interval $i$. By simple induction we have that all of these points lie in the same interval $i$, and $e_0=e_1=\ldots=e_{k-2}=0$. In the reversed case, the same argument gives $e_i'=0$ for every $i$, and therefore $e_0=e_1=\cdots=e_{k-2}=1$.
\end{proof}

\begin{lemma}[digitwise $k$-AP]\label{lem:carry}
Let $L\geq2$, and let
\[
n_i=a+id\in\Z/p^L\Z
\qquad(i=0,1,\ldots,k-1)\;.
\]
For each $n\in\Z/p^L\Z$, write its standard representative as
\[
n=\sum_{j=0}^{L-1}x_j(n)p^j\;,
\qquad
x_j(n)\in\{0,1,\dots,p-1\}\;,
\]
and set
\[
\mathbf{x}(n):=(x_0(n),\dots,x_{L-1}(n))\in\F_p^L\;.
\]
Suppose that for every $j=0,\dots,L-2$,
\[
\tau(x_j(n_{i-1}))=\tau(x_j(n_{k-i}))
\qquad\forall i\in [k/2]\;.
\]
Then there exists $\mathbf h\in\F_p^L$ such that
\[
\mathbf{x}(n_i)=\mathbf{x}(n_0)+i\mathbf h
\qquad(i=0,1,\ldots,k-1)\;.
\]
Moreover, if $d\neq0$ in $\Z/p^L\Z$, then $\mathbf h\neq0$.
\end{lemma}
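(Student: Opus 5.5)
Write $d=\sum_{j=0}^{L-1} y_j p^j$ with $y_j\in\{0,\dots,p-1\}$ (the standard representative of $d$). The plan is to run the schoolbook addition $n_{i+1}=n_i+d$ digit by digit from the least significant end, and to use Lemma~\ref{lem:one-digit} at each digit position $j\le L-2$ to show that the incoming carry is the same for every step $i$. Concretely, let $c_j^{(i)}\in\{0,1\}$ be the carry into position $j$ when computing $n_{i+1}=n_i+d$, with $c_0^{(i)}=0$. Then
\[
x_j(n_{i+1})\equiv x_j(n_i)+y_j+c_j^{(i)}\pmod p,
\]
and $c_{j+1}^{(i)}=1$ exactly when $x_j(n_i)+y_j+c_j^{(i)}\ge p$. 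The claim to prove by induction on $j=0,\dots,L-1$ is that $c_j^{(i)}=:c_j$ does not depend on $i\in\{0,\dots,k-2\}$.

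The base case $j=0$ is immediate. For the inductive step at a position $j\le L-2$, assume $c_j$ is independent of $i$. Then the digits $x_j(n_0),\dots,x_j(n_{k-1})$ satisfy $x_j(n_{i+1})\equiv x_j(n_i)+s_j\pmod p$ with a common step $s_j\equiv y_j+c_j$, reduced into $\{0,\dots,p-1\}$. By hypothesis these digits satisfy the symmetric $\tau$-condition, so Lemma~\ref{lem:one-digit} shows that the wraparound indicators $e_i=\mathbf 1_{\{x_j(n_{i+1})<x_j(n_i)\}}$ are all equal. The delicate point is identifying the outgoing carry $c_{j+1}^{(i)}$ with $e_i$. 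When $y_j+c_j\le p-1$, the carry is $1$ exactly when the sum reaches $p$, which happens exactly when the new digit is smaller than the old one (strictly smaller, since $y_j+c_j<p$). In the edge case $y_j=p-1$, $c_j=1$ we have $s_j=0$ and every step carries, so $c_{j+1}^{(i)}=1$ for all $i$, which is again independent of $i$. In both cases $c_{j+1}$ is independent of $i$, which closes the induction. The top digit $j=L-1$ needs no $\tau$ hypothesis, since any carry out of it is discarded modulo $p^L$.

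It follows that for every $j$ the step $h_j:=y_j+c_j \bmod p$ is independent of $i$. Hence $\mathbf x(n_{i+1})=\mathbf x(n_i)+\mathbf h$ in $\F_p^L$, and so $\mathbf x(n_i)=\mathbf x(n_0)+i\mathbf h$.

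For nontriviality, suppose $\mathbf h=0$. Then $\mathbf x(n_1)=\mathbf x(n_0)$, so $n_1=n_0$ because the digit map is a bijection on $\Z/p^L\Z$. This gives $d=0$, a contradiction. The main obstacle is the carry bookkeeping in the inductive step, in particular the $s_j=0$ edge case and making sure the indicator in Lemma~\ref{lem:one-digit} coincides with the actual carry.
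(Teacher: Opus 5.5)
Your proof is correct and follows the paper's route. You induct over digit positions $0,\dots,L-2$, use Lemma~\ref{lem:one-digit} at each position to show the carry into the next digit is independent of $i$, and get nontriviality from injectivity of the digit map. You also treat the edge case $y_j=p-1$, $c_j=1$ explicitly: there the step is $0$ but every addition carries, so the wraparound indicator of Lemma~\ref{lem:one-digit} is not the carry. The paper's one-line proof passes over this, and your handling of it is correct.
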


\begin{proof}
Iterating Lemma~\ref{lem:one-digit} from digit $0$ to $L-2$ shows inductively that the $k-1$ carries to the next digit are equal at every digit position, and hence the digit vectors $\mathbf x(n_0),\ldots,\mathbf x(n_{k-1})$ form a $k$-AP in $\F_p^L$. And $\mathbf h=0$ would imply $n_0=n_1$ and thus $d=0$.
\end{proof}

\section{The field norm construction}

Choose an irreducible polynomial $f(T)\in\F_p[T]$ of odd degree $t\ge 3$, and let $\alpha$ be one of its roots in $\F_{p^{t}}$. Then $\{1,\alpha,\ldots,\alpha^{t-1}\}$ is an $\F_p$-basis of $\F_{p^{t}}$. Identifying $(x_0,x_1,\ldots,x_{t-1})\in\F_p^{t}$ with
\[
x_0+x_1\alpha+\ldots+x_{t-1}\alpha^{t-1}\;,
\]
define
\[
P_t(x_0,x_1,\ldots,x_{t-1})
:=
\Norm_{\F_{p^{t}}/\F_p}
\bigl(x_0+x_1\alpha+\ldots+x_{t-1}\alpha^{t-1}\bigr)\;,
\]
where
\[
\Norm_{\F_{p^{t}}/\F_p}(z):=z^{1+p+\ldots+p^{t-1}}.
\]
The existence of $f(T)$, as well as the fact that $P_t:\F_p^{t}\rightarrow\F_p$ is a \emph{homogeneous} polynomial of \emph{degree $t$} with \emph{no nontrivial zero}, is standard in the theory of finite fields; see~\cite[Corollary~2.11; Theorem~2.28]{Lidl_Niederreiter_1996}. We also let $P_1: \F_p\rightarrow\F_p$ be the identity map. The cubic norm construction $P_3$ is also used in a concurrent result~\cite{Itabe2026OneThird} to claim an intermediate $O(N^{1/3})$ upper bound.

\section{The layered norm construction}
\label{sec:quarter}
For $\mathbf x=(x_0,x_1,\ldots,x_{k^2/4-1})\in\F_p^{k^2/4}$, we decompose
\[
    \mathbf x=\left(\mathbf u^{(1)},\mathbf u^{(3)},\ldots,\mathbf u^{(k-1)}\right)\;,
\]
where $\mathbf u^{(i)}:=\left(x_{(i-1)^2/4},x_{(i-1)^2/4+1},\ldots, x_{(i+1)^2/4-1}\right)\in\F_p^i$. Then we define
\[
    Q(\mathbf x):=\sum_{1\le i\le k-1,\;i\text{ odd}}P_{i}(\mathbf u^{(i)})\;.
\]
A closely related polynomial construction appears in \cite{Fuhrer2024LargeLS} for producing large line-evasive sets, where norm layers of both odd and even degrees are used.
\begin{lemma}[layered norm]\label{lem:layered-norm}
Let
\[
    \mathbf y_i=\mathbf y_0+i\mathbf h
    \qquad(i=0,1,\ldots,k-1)
\]
be a $k$-AP in $\F_p^{k^2/4}$. If $Q(\mathbf{y}_{i-1})=Q(\mathbf{y}_{k-i})$ for every $i\in [k/2]$,
then $\mathbf h=0$.
\end{lemma}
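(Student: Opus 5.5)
The plan is to view $g(i):=Q(\mathbf y_0+i\mathbf h)$ as a polynomial in the integer parameter $i$ over $\F_p$, and to peel off the layers from the top degree down. Write $\mathbf h=(\mathbf v^{(1)},\mathbf v^{(3)},\ldots,\mathbf v^{(k-1)})$ with the same block decomposition as $\mathbf x$. Then
\[
g(i)=\sum_{j\text{ odd}} P_j\bigl(\mathbf u_0^{(j)}+i\mathbf v^{(j)}\bigr),
\]
and each summand is a polynomial in $i$ of degree at most $j$. Its $i^j$-coefficient is $P_j(\mathbf v^{(j)})$, by homogeneity of $P_j$. So $g$ is a polynomial of degree at most $k-1$ in $i$.

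The symmetry hypothesis $g(i-1)=g(k-i)$ for $i\in[k/2]$ says $g(m)=g(k-1-m)$ for $m=0,\ldots,k-1$. Consider $r(m):=g(m)-g(k-1-m)$. Under the substitution $m=\tfrac{k-1}{2}+z$, which is valid since $p>k$ makes $2$ invertible, we have $r(m)=G(z)-G(-z)$, where $G(z)=g(\tfrac{k-1}{2}+z)$. So $r$ is an odd polynomial in $z$ of degree at most $k-1$. It is therefore determined by its odd coefficients $z,z^3,\ldots,z^{k-1}$, which is $k/2$ unknowns. It vanishes at the $k/2$ points $z=\tfrac{1}{2},\tfrac32,\ldots,\tfrac{k-1}{2}$. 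Writing $r(z)=z\,s(z^2)$ with $\deg s\le k/2-1$, the polynomial $s$ vanishes at the $k/2$ values $(\ell/2)^2$ for odd $\ell\le k-1$. These values are distinct in $\F_p$ because $p>k$ forces $\ell\not\equiv\pm\ell'$. Hence $s\equiv0$, and $G$ is even in $z$.

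Evenness of $G$ kills every odd coefficient of $G$; in particular the $z^{k-1}$ coefficient vanishes. That coefficient equals the $i^{k-1}$ coefficient of $g$, which is $P_{k-1}(\mathbf v^{(k-1)})$, since only the top layer reaches degree $k-1$. Because $P_{k-1}$ has no nontrivial zero, $\mathbf v^{(k-1)}=0$.

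Now the top layer of $g$ is constant in $i$, and we induct downward. Suppose $\mathbf v^{(j')}=0$ for all odd $j'>j$. Those layers contribute constants, so $G$ has degree at most $j$, and its $z^j$ coefficient is $P_j(\mathbf v^{(j)})$; the shift $i\mapsto \tfrac{k-1}{2}+z$ does not change the leading coefficient. Since $j$ is odd and $G$ is even, $P_j(\mathbf v^{(j)})=0$, so $\mathbf v^{(j)}=0$. For $j=1$ we have $P_1=\mathrm{id}$, giving $\mathbf v^{(1)}=0$. Hence $\mathbf h=0$.

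The main obstacle is the step showing that $k/2$ symmetric equalities force $G$ to be even. One must handle the odd part carefully, via the $z\,s(z^2)$ factorization, and check that the nodes $(\ell/2)^2$ are distinct mod $p$; this is exactly where $p>k$ is used. The layering with odd degrees is designed so that each successive leading coefficient is an odd one, which is what makes the evenness useful.
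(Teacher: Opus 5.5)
Your proof is correct and follows the paper's argument. You show that $g(t+\frac{k-1}{2})$ is even, then peel off the odd layers by descending induction, using that the top coefficient is $P_j(\mathbf v^{(j)})$ and that $P_j$ has no nontrivial zero. The only difference is in the evenness step. The paper gets it directly: $G(z)-G(-z)$ has degree at most $k-1$ and vanishes at the $k$ distinct points $\pm\frac12,\ldots,\pm\frac{k-1}{2}$, so it is zero. Your $z\,s(z^2)$ reduction is therefore unnecessary, as is the distinctness check for the $(\ell/2)^2$; for the record, ruling out $\ell\equiv-\ell'$ there uses that $\ell+\ell'$ is even and $p$ is odd, not just $p>k$.
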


\begin{proof}
Decompose
\[
\mathbf y_0=\left(\mathbf u^{(1)},\mathbf u^{(3)},\ldots,\mathbf u^{(k-1)}\right)\;,\qquad\mathbf h=\left(\mathbf v^{(1)},\mathbf v^{(3)},\ldots,\mathbf v^{(k-1)}\right)\;,
\]
where $\mathbf u^{(i)},\mathbf v^{(i)}\in\F_p^i$ for every odd
$i\in\{1,3,\ldots,k-1\}$. 

Let $a:=\frac{k-1}{2}\in \F_p$, where division by $2$ is taken in $\F_p$, and define
\[
g(t):=Q(\mathbf y_0+t\mathbf h)=\sum_{1\le i\le k-1,\;i\text{ odd}}P_i\bigl(\mathbf u^{(i)}+t\mathbf v^{(i)}\bigr)\in \F_p[t]\;.
\]
Recall that $Q(\mathbf{y}_{i-1})=Q(\mathbf{y}_{k-i})$ for every $i\in [k/2]$, so $g(t+a)-g(-t+a)$ has at least $k$ roots in $\F_p$: $\{-a,-a+1,\ldots,k-1-a\}$, which are distinct because $p>k$.
Note that $\deg g$ is at most $k-1$, and by the standard fact that a nonzero polynomial of degree at most $k-1$ over a field has at most $k-1$ roots, $g(t+a)-g(-t+a)$ must be the zero polynomial, \textit{i.e.}, $g(t+a)$ is an even polynomial in $t$. 

We now prove by descending induction over the odd integers $r=k-1,k-3,\ldots,1$ that $\mathbf v^{(r)}=0$. Suppose that $\mathbf v^{(s)}=0$ for every odd $s>r$. The corresponding terms
\[
P_s\bigl(\mathbf u^{(s)}+(t+a)\mathbf v^{(s)}\bigr)=P_s\bigl(\mathbf u^{(s)}\bigr)
\]
are then constant in $t$, while every term indexed by $s<r$ has
degree less than $r$. Then by the homogeneity of $P_r$, the coefficient of $t^r$ in $g(t+a)$ is exactly $P_r\bigl(\mathbf v^{(r)}\bigr)$.
Since $g(t+a)$ is an even polynomial in $t$ and $r$ is odd, this coefficient must vanish. Therefore $P_r\bigl(\mathbf v^{(r)}\bigr)=0$. As $P_r$ has no nontrivial zero, it follows that $\mathbf v^{(r)}=0$.
The induction therefore gives
\[
\mathbf h
=
\bigl(\mathbf v^{(1)},\mathbf v^{(3)},\ldots,
\mathbf v^{(k-1)}\bigr)
=0\;.
\]
\end{proof}

\begin{proof}[Proof of Theorem~\ref{thm:cyclic}]
For $n\in\Z/p^{k^2/4}\Z$, write its standard base-$p$ expansion as
\[
    n=x_0(n)+px_1(n)+\cdots+p^{k^2/4-1}x_{k^2/4-1}(n)\;.
\]
Let $\mathbf x(n):=(x_0(n),x_1(n), \ldots, x_{k^2/4-1}(n))$.
Define
\[
    c_k(n):=
    \bigl(
       \tau(x_0(n)),\ldots,\tau(x_{k^2/4-2}(n)),
       Q(\mathbf x(n))
    \bigr)\;.
\]
This coloring uses at most $(k-1)^{k^2/4-1}p$ colors. Suppose that
\[
    n_i=a+id\;,\qquad i=0,1,\ldots,k-1\;,
\]
is symmetrically colored by $c_k$. Applying Lemma~\ref{lem:carry} with $L=k^2/4$, we obtain
\[
    \mathbf x(n_i)=\mathbf x(n_0)+i\mathbf h
\]
for some $\mathbf h\in\F_p^{k^2/4}$. Lemma~\ref{lem:layered-norm} then implies that $\mathbf h=0$. Lemma~\ref{lem:carry} therefore gives $d=0$ in $\Z/p^{k^2/4}\Z$. Hence there is no nontrivial symmetrically colored $k$-AP.
\end{proof}

\begin{remark}
    This proof is a standard application of the polynomial method in additive combinatorics; see, for example, \cite{guth2016polynomial}. In view of the Chevalley–Warning theorem, one should not expect to gain more than $O_k(1)$ dimensions over $k^2/4$ by using another polynomial of degree $O_k(1)$ within the same framework.
\end{remark}

\begin{corollary}\label{cor:interval}
For every fixed even integer $k\ge4$ and every positive integer $N$, the interval $[N]$ admits a coloring with $O_k(N^{4/k^2})$ colors and no nontrivial symmetrically colored $k$-AP. More precisely, for all sufficiently large $N$, at most $2(k-1)^{k^2/4-1}N^{4/k^2}$ colors suffice.
\end{corollary}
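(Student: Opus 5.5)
The plan is to embed $[N]$ into a cyclic group $\Z/p^{k^2/4}\Z$ with $p$ a suitable prime, and pull back the coloring from Theorem~\ref{thm:cyclic}. Set $L:=k^2/4$. Choose the prime $p$ with
\[
N^{1/L}< p\le 2N^{1/L},
\]
which exists by Bertrand's postulate. For $N$ sufficiently large we also get $p>k$, so Theorem~\ref{thm:cyclic} applies. Then $p^L> N$, so the map $\iota:[N]\to\Z/p^L\Z$, $n\mapsto n\bmod p^L$, is injective. Define the coloring $c(n):=c_k(\iota(n))$, where $c_k$ is the coloring from Theorem~\ref{thm:cyclic}. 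It uses at most $(k-1)^{L-1}p\le 2(k-1)^{L-1}N^{4/k^2}$ colors, which is the stated explicit bound.

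Next we check that no nontrivial symmetrically colored $k$-AP survives. Take a $k$-AP $a,a+d,\ldots,a+(k-1)d$ in $[N]$ with $d\ne0$. Its image $\iota(a)+i\,\iota(d)$ is a $k$-AP in $\Z/p^L\Z$, since reduction modulo $p^L$ is a ring homomorphism. Because $0<|d|<N<p^L$, the difference $d$ is nonzero modulo $p^L$, so the image progression is nontrivial. Symmetric coloring is defined purely through the colors of the terms, so if the original progression were symmetrically colored under $c$, its image would be symmetrically colored under $c_k$. This contradicts Theorem~\ref{thm:cyclic}.

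For small $N$ (those with $N^{1/L}$ not exceeding $k$), we can color $[N]$ with $N$ distinct colors, which is $O_k(1)$. This yields the $O_k(N^{4/k^2})$ bound for every $N$. Alternatively, we can always pick $p$ to be the least prime exceeding $\max(k,N^{1/L})$, which keeps the count bounded by a constant times $N^{4/k^2}$.

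There is no real obstacle here. The only points that need care are that the embedding preserves nontriviality, which requires $p^L>N$, and that Bertrand's postulate gives $p\le 2N^{1/L}$, which produces the constant $2$.
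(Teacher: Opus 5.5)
Your proposal is correct and matches the paper's proof: both use Bertrand's postulate to pick a prime $p>k$ with $N^{4/k^2}<p\le 2N^{4/k^2}$, then restrict the coloring of Theorem~\ref{thm:cyclic} to $[N]\hookrightarrow\Z/p^{k^2/4}\Z$. You also spell out two points the paper leaves implicit: why nontriviality of the progression survives the embedding (since $0<|d|<p^{k^2/4}$), and how the finitely many $N<k^{k^2/4}$ fit the $O_k$ bound.
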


\begin{proof}
By the Bertrand--Chebyshev theorem, for every integer $N\ge k^{k^2/4}$, there is a prime $p>k$ with
\[
\lfloor N^{4/k^2}\rfloor< p<2\lfloor N^{4/k^2}\rfloor\;.
\]
Restricting the coloring from Theorem~\ref{thm:cyclic} to $[N]\subseteq \Z/p^{k^2/4}\Z$ gives the result.
\end{proof}

\section{Application to \erdos Problem~160}

In \erdos Problem~160, one must exclude not only the symmetric $ABBA$ pattern, but all color patterns on a nontrivial 4-AP that use at most two colors. Hunter~\cite{Hunter160} observed that the remaining patterns can be eliminated by taking the product with Behrend-style colorings~\cite{Behrend} using $N^{o(1)}$ colors. We give a proof of this observation.

\begin{lemma}[Lemma~3.2 of \cite{DTZ}]\label{lem:behrend-coloring-1}
    The patterns $AAAA$, $AAAB$, $AABA$, $ABAA$, and $BAAA$, on nontrivial $4$-APs in $[N]$, can be eliminated using an $N^{o(1)}$ coloring.
\end{lemma}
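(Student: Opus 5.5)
The plan is to use a Behrend-type construction: color each $n\in[N]$ by the index of a set in a partition of $[N]$ into $N^{o(1)}$ sets, each free of nontrivial $3$-APs. Such a partition exists. Behrend's theorem gives a $3$-AP-free set $B\subseteq[2N]$ with $|B|\ge N e^{-C\sqrt{\log N}}$. A standard covering argument then shows that $O(N\log N/|B|)=N^{o(1)}$ random translates $B-t$ cover $[N]$. Each point is covered with probability about $|B|/(2N)$, so a union bound over the $N$ points works. We assign each $n$ to the first translate containing it. Since translates of a $3$-AP-free set are $3$-AP-free, every color class is $3$-AP-free.

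The plan is then to check that each forbidden pattern forces three equally colored terms that form a nontrivial $3$-AP. Take a nontrivial $4$-AP $a,a+d,a+2d,a+3d$ with $d\ne0$.
\begin{itemize}
\item Pattern $AAAA$ and pattern $AAAB$: the terms $a,a+d,a+2d$ share a color.
\item Pattern $BAAA$: the terms $a+d,a+2d,a+3d$ share a color.
\item Pattern $AABA$: the terms $a,a+d,a+3d$ share a color. These are not a $3$-AP, so this case needs a fix.
\item Pattern $ABAA$: likewise, the terms $a,a+2d,a+3d$ share a color and are not a $3$-AP.
\end{itemize}

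For the last two patterns, I would add a second coordinate to the color to exclude the configuration $\{x,x+d,x+3d\}$ with $d\ne0$. In this configuration the middle point satisfies $3(x+d)=2x+(x+3d)$. So I would use a coloring whose classes contain no nontrivial solution to $2x+z=3y$; this equation is exactly the configuration $y=x+d$, $z=x+3d$. The reflected configuration from $ABAA$ is $\{x,x+2d,x+3d\}$, and its middle point satisfies $3(x+2d)=x+2(x+3d)$, i.e.\ the equation $x+2z=3y$. Both equations are linear with coefficients summing to zero, so they are translation-invariant and Behrend's construction handles them. The relevant fact is that points on a sphere in a digit box with small digits avoid every nontrivial solution of $\lambda_1x+\lambda_2z=(\lambda_1+\lambda_2)y$ with positive $\lambda$'s. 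This holds by strict convexity of the Euclidean norm: a strict convex combination of distinct points of a sphere lies strictly inside the sphere. Choosing the digit bound small compared with the base prevents carries.

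The final coloring is the product of the three covering colorings, one for each equation $x+z=2y$, $2x+z=3y$, $x+2z=3y$, each using $N^{o(1)}$ colors. The expected main obstacle is confirming that the Behrend density bound $Ne^{-O(\sqrt{\log N})}$ survives for the weighted equations. I expect it does, since the same sphere argument applies verbatim.
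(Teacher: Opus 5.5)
Your proof is correct. The paper does not prove this lemma; it quotes it from \cite{DTZ}. So the natural comparison is with the paper's own Behrend-style argument for $AABB$ in Lemma~\ref{lem:behrend-coloring-3}.

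Your reduction is right. The five patterns are exactly those in which three of the four terms share a color. The possible triples are nontrivial solutions of the translation-invariant equations $x+z=2y$, $2x+z=3y$ and $x+2z=3y$, and each has distinct entries since $d\neq0$. A sphere set whose base-$M$ digits are below $M/3$ has no carries in any of these equations, so strict convexity excludes them. Its density is still $Ne^{-O(\sqrt{\log N})}$: the smaller digit bound costs only a factor $3^{m}=e^{O(\sqrt{\log N})}$, so the obstacle you flagged does not arise. Covering by translates then finishes the proof.

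The paper's route for $AABB$ is different. It colors every integer directly, using carry-control classes $\tau$ of all its digits together with $\sum_j x_j^2$. That is explicit and deterministic, but the carry control is tied to the pattern. Its $\tau$ is calibrated to $\tau(y_0)=\tau(y_1)$, $\tau(y_2)=\tau(y_3)$, and it does not control carries under the $AABA$ equalities. For example, in base $16$ the digits $1,8,15,6$ with step $7$ have $\tau$-values $3,3,0,3$, yet carry only in the last step. Restricting to small digits, as you do, rules out carries from the start. The price is a random (or greedy) covering step and an extra $\log N$ factor, both harmless at scale $N^{o(1)}$.

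Some small tidy-ups:
\begin{itemize}
\item The equation $x+2z=3y$ is $2x+z=3y$ with $x$ and $z$ swapped, so your third coloring is redundant.
\item A single sphere set with digits below $M/3$ already avoids all three equations at once, since three distinct collinear points cannot lie on a sphere. One covering coloring therefore suffices.
\item In the covering step, take $t$ uniform in a window such as $\{-2N,\ldots,2N\}$. Then $n+t$ sweeps all of $[2N]\supseteq B$ for every $n\in[N]$, and the hitting probability is exactly $|B|/(4N+1)$. The phrase ``about $|B|/(2N)$'' is not literally right for an unspecified window.
\end{itemize}
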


\begin{lemma}[Lemma~7.11 of \cite{DTZ}]\label{lem:behrend-coloring-2}
    The pattern $ABAB$ on nontrivial $4$-APs in $[N]$ can be eliminated using an $N^{o(1)}$ coloring.
\end{lemma}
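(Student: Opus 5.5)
The plan is to adapt the scheme of Section~\ref{sec:quarter} to the pattern $ABAB$: a digit-interval coloring for carry control, together with a quadratic \emph{Behrend-type} invariant in place of the norm $Q$. Writing $\mathbf y_i=\mathbf y_0+i\mathbf h$ for a genuine $4$-AP of vectors in $\Z^D$, the $ABAB$ conditions read $|\mathbf y_0|^2=|\mathbf y_0+2\mathbf h|^2$ and $|\mathbf y_0+\mathbf h|^2=|\mathbf y_0+3\mathbf h|^2$. Expanding gives
\[
4\langle\mathbf y_0,\mathbf h\rangle+4|\mathbf h|^2=0
\qquad\text{and}\qquad
4\langle\mathbf y_0,\mathbf h\rangle+8|\mathbf h|^2=0 .
\]
Subtracting yields $4|\mathbf h|^2=0$, so $\mathbf h=0$ over $\Z$. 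So the Euclidean norm, colored as an integer, kills $ABAB$ once the AP has been lifted to a digitwise AP, exactly as Lemma~\ref{lem:layered-norm} does for the symmetric pattern.

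Concretely, I would choose $D\approx\sqrt{\log N}$ and a base $M\approx N^{1/D}$, and write $n\in[N]$ in base $M$ with digit vector $\mathbf x(n)\in\{0,\dots,M-1\}^D$. The color of $n$ would be the pair consisting of $|\mathbf x(n)|^2\in\{0,\dots,DM^2\}$ and the tuple of interval labels $\tau_q(x_j(n))=\lfloor qx_j(n)/M\rfloor$ for a fixed constant $q$ (say $q=8$). The number of colors is then at most
\[
(DM^2+1)\,q^{D}=N^{O(1/D)}e^{O(D)}=N^{o(1)} .
\]
This is the Behrend-style count, and the same reasoning applies to the five patterns in Lemma~\ref{lem:behrend-coloring-1}.

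The key step, and the one I expect to be the main obstacle, is the analogue of Lemma~\ref{lem:one-digit}: from $\tau_q(x_0)=\tau_q(x_2)$ and $\tau_q(x_1)=\tau_q(x_3)$ one must deduce that the three carries are equal. The first condition forces $2s$ to lie within $M/q$ of $0$ modulo $M$. This gives the good case $s\approx 0$ (or $s\approx M$, which reverses as before), where all four digits sit in adjacent intervals and the argument of Lemma~\ref{lem:one-digit} goes through. However, it also allows the bad case $s\approx M/2$, where the carries can alternate, for instance $0,1,0$.

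To rule out the bad case I would first try adding a second, independent digit system to the coloring: the interval labels of the digits of $n+\lfloor M/2\rfloor(1+M+\cdots+M^{D-1})$, or alternatively of $\lfloor n/2\rfloor$. Under this shifted system a step near $M/2$ in one system is a step near $0$ or $M$ in the other at the relevant digit, so carry equality can be forced digit by digit via induction as in Lemma~\ref{lem:carry}. This only multiplies the color count by $q^D=N^{o(1)}$. If this bookkeeping fails, the fallback is to pass to $2n$: the AP $2a,2a+2d,\dots$ has step $2s$ near $0$, so its digits form a genuine AP, and one applies the norm argument to it, coloring by the data of $2n$.

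Once carries are controlled, $\mathbf x(n_i)=\mathbf x(n_0)+i\mathbf h$ holds over $\Z$ (digits lie in $[0,M)$ with no wraparound). The computation above then gives $\mathbf h=0$, and hence $d=0$.
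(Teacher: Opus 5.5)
\textbf{Gap: carry control for the $ABAB$ pattern is not established.}

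Your norm computation and your color count are correct. You also correctly isolate the obstruction: a lowest-digit step $s\approx M/2$ with carries $0,1,0$. However, the mechanism you propose for removing it is false.

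Translating by $\lfloor M/2\rfloor(1+M+\cdots+M^{D-1})$ leaves the common difference unchanged. So both digit systems see the same lowest-digit step $d\bmod M$: a step near $M/2$ stays near $M/2$, and the carries still alternate in the shifted system.

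This is a failure of the coloring itself, not only of the argument. Take $M=2^{10}$, $q=8$ and five digits. Let $n_0$ and $d$ have digits $(200,57,0,100,360)$ and $(511,511,1,997,5)$, least significant first. Then:
\begin{itemize}
\item $n_0,\ldots,n_3$ have digit vectors $(200,57,0,100,360)$, $(711,568,1,73,366)$, $(198,56,3,46,372)$, $(709,567,4,19,378)$.
\item Their squared norms are $182849,967431,182849,967431$.
\item Their labels are $(1,0,0,0,2)$, $(5,4,0,0,2)$, $(1,0,0,0,2)$, $(5,4,0,0,2)$.
\item The shifted terms have digits $(712,569,512,612,872)$, $(199,57,514,585,878)$, $(710,568,515,558,884)$, $(197,56,517,531,890)$, and their labels again follow $ABAB$.
\end{itemize}
Here digits $0$ and $1$ both carry as $0,1,0$, and the two resulting parity defects cancel inside $|\mathbf x|^2$.

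Your $2n$ fallback has the same problem in its justification. The claim \emph{$2s$ near $0$} concerns only the lowest digit. At higher digits the digitwise step, which absorbs carries, is not controlled by anything you state.

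\textbf{The missing idea: a label that sees each carry directly.} Take $M$ even, and let $e_0,e_1,e_2\in\{0,1\}$ be the carries out of digit $j$ in the additions $n_0+d$, $n_1+d$, $n_2+d$. Schoolbook addition gives $x_{j+1}(n_2)-x_{j+1}(n_0)\equiv e_0+e_1$ and $x_{j+1}(n_3)-x_{j+1}(n_1)\equiv e_1+e_2 \pmod 2$. This holds because the contribution $2x_{j+1}(d)$ and the multiples of $M$ are even.

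So record the parities $x_j(n)\bmod 2$ for $1\le j\le D-1$ in the color, at the cost of an extra factor $2^{D-1}=N^{o(1)}$. This forces $e_0=e_1=e_2$ at every digit below the top. At the top digit there are no carries, since all terms lie below $M^D$. Hence $\mathbf x(n_i)=\mathbf x(n_0)+i\mathbf h$ over $\Z$, and your computation gives $\mathbf h=0$. No interval labels are needed.

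\textbf{Your alternatives.} Your $\lfloor n/2\rfloor$ option does work for even $M$ and $q$, but for this parity reason rather than the one you give. Digit $j$ of $\lfloor n/2\rfloor$ is $\lfloor x_j(n)/2\rfloor+\tfrac{M}{2}\,(x_{j+1}(n)\bmod 2)$, so its label records the parity of $x_{j+1}(n)$.

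The $2n$ route can also be repaired, but it needs three ingredients you do not supply:
\begin{itemize}
\item read the labels of $n$ as positions of $\{n/M^{j+1}\}$, which gives $\|2d/M^{j+1}\|_{\mathbb{R}/\Z}<1/q$ at every level $j$;
\item use the labels of $2n$ to rule out wraparound at each level;
\item color by $|\mathbf x(2n)|^2$ in place of $|\mathbf x(n)|^2$.
\end{itemize}
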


\begin{lemma}[Behrend-style coloring]\label{lem:behrend-coloring-3}
    The pattern $AABB$ on nontrivial $4$-APs in $[N]$ can be eliminated using an $N^{o(1)}$ coloring.
\end{lemma}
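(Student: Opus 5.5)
We want an $N^{o(1)}$ coloring of $[N]$ such that no nontrivial 4-AP $a,a+d,a+2d,a+3d$ is colored $AABB$, meaning $c(a)=c(a+d)$ and $c(a+2d)=c(a+3d)$. By reflecting $x\mapsto N+1-x$ it suffices to handle $d>0$. The plan is a Behrend-style coloring that records the digits of $n$ together with a norm, chosen so that the two equal-color conditions force a linear relation that a sphere cannot satisfy.

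\textbf{The coloring.} Fix a base $M=2^m$ and write $n=\sum_{j<D}x_j(n)M^j$, where $M^D\approx N$. Color $n$ by the pair
\[
\Bigl(\lfloor x_j(n)/(M/8)\rfloor\ \text{for each } j,\ \ \textstyle\sum_j x_j(n)^2\Bigr).
\]
The first coordinate is the coarse position of each digit, giving $8^D$ options; the second is the squared norm, giving at most $DM^2$ options. Taking $M=2^{\sqrt{\log N}}$ and $D=\sqrt{\log N}$ makes the total $N^{o(1)}$, as in Behrend's construction.

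\textbf{Step 1: the differences are digitwise.} Carries break linearity, so the first task is to show that the digit vectors behave well. I expect this step to need the most care. Suppose $c(a)=c(a+d)$. Because the two numbers agree in the coarse position of every digit, the digitwise difference $\mathbf x(a+d)-\mathbf x(a)$ has every coordinate in $(-M/8,M/8)$. So it is a signed-digit representation of $d$ with small digits. Likewise $\mathbf x(a+3d)-\mathbf x(a+2d)$ is a small-digit representation of the same $d$.

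\textbf{Step 2: the two representations coincide.} A representation of $d$ in base $M$ with digits in $(-M/2,M/2)$ is unique. Hence the two difference vectors are equal; call the common vector $\mathbf h$, which is nonzero since $d\neq0$.

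\textbf{Step 3: relate $\mathbf x(a+2d)$ to $\mathbf x(a)$.} The integers $a$ and $a+2d$ differ by $2d$. Write $\mathbf x(a+2d)=\mathbf x(a)+\mathbf g$, where $\mathbf g$ is some integer vector representing $2d$ in base $M$. There is no coarse-color equality available here, so $\mathbf g$ need not have small digits, and a general $\mathbf g$ is awkward to control.

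\textbf{Step 4: the norm condition.} Set $\mathbf u=\mathbf x(a)$ and $\mathbf w=\mathbf x(a+2d)$. The norm coordinate of the color gives $|\mathbf u|^2=|\mathbf u+\mathbf h|^2$ and $|\mathbf w|^2=|\mathbf w+\mathbf h|^2$. These reduce to
\[
2\mathbf u\cdot\mathbf h+|\mathbf h|^2=0
\quad\text{and}\quad
2\mathbf w\cdot\mathbf h+|\mathbf h|^2=0,
\]
and subtracting yields $\mathbf g\cdot\mathbf h=0$.

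\textbf{Closing the argument.} To finish, I need $\mathbf g$ to be essentially $2\mathbf h$. Then $\mathbf g\cdot\mathbf h=2|\mathbf h|^2\neq0$, a contradiction. Two routes seem available.

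\emph{Route 1: an extra color coordinate.} Add to the color the carry pattern of adding $\mathbf h$, or equivalently $\tau$-type coarse digits as in Lemma~\ref{lem:carry}. This should force $\mathbf g=2\mathbf h$.

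\emph{Route 2: a mixed coordinate.} Add the coordinate $\sum_j x_j\cdot(\text{coarse class of }x_j)$ to the color.

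I would try Route 1 first. With all digits confined to, say, $[0,M/4)$ via a restriction-and-translate trick, no carries occur between $a$ and $a+3d$, so $\mathbf g=2\mathbf h$ directly. This is exactly Behrend's "digits in the lower part" restriction. Covering $[N]$ by $N^{o(1)}$ such shifted pieces, and adding a color coordinate for the piece, keeps the total at $N^{o(1)}$. The main obstacle is ensuring that every 4-AP lies within one piece. I would handle it by using several base-shifted versions of the coloring, as in \cite{DTZ}.
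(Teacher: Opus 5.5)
There is a genuine gap, and it sits at the one step that matters. Your Steps 1, 2 and 4 are sound. The uniqueness of base-$M$ representations with digits in $(-M/2,M/2)$ is a neat way to get $\mathbf x(a+d)-\mathbf x(a)=\mathbf x(a+3d)-\mathbf x(a+2d)$. But nothing in your coloring controls the carry from $a+d$ to $a+2d$, and with uniform coarse classes it cannot be controlled.

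In fact your coloring admits nontrivial $AABB$ progressions. Take $M=64$ and $D=6$, which are your parameters at $N=2^{36}$, and list digits from least significant. Let $\mathbf x(n_0)=(62,0,15,0,3,0)$ and $d=\sum_j h_j 64^j>0$ with $\mathbf h=(1,0,-5,2,-1,1)$. Then
\[
\mathbf x(n_1)=(63,0,10,2,2,1),\quad \mathbf x(n_2)=(0,1,5,4,1,2),\quad \mathbf x(n_3)=(1,1,0,6,0,3).
\]
The classes $\lfloor x_j/8\rfloor$ are $(7,0,1,0,0,0)$ for both $n_0$ and $n_1$, and all $0$ for both $n_2$ and $n_3$. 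The sums of squares are $4078,4078,47,47$. Here $\mathbf g=2\mathbf h+(-64,1,0,0,0,0)$ and $\mathbf g\cdot\mathbf h=0$, which is exactly the case you could not exclude. Digit $0$ runs $M-2,M-1,0,1$, and a uniform partition cannot detect a carry in the middle step.

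Your Route 1 does not repair this as written.
\begin{itemize}
\item The ``carry pattern of adding $\mathbf h$'' is not a function of $n$, so it cannot be a color coordinate.
\item The $\tau$ of Lemma~\ref{lem:carry} is again a uniform partition. It is designed for the symmetric constraints $\tau(y_0)=\tau(y_3)$, $\tau(y_1)=\tau(y_2)$, and it also misses the run $M-2,M-1,0,1$.
\item The restriction-and-translate plan leaves open precisely the obstacle you name, because $AABB$ links only $a$ with $a+d$ and $a+2d$ with $a+3d$. The base-shifted versions are not worked out.
\item Route 2 is not analyzed.
\end{itemize}

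The missing idea is a partition whose classes shrink geometrically toward $M$, as in the paper's $\tau(x)=\lfloor\log_2(M-x)\rfloor$. Work up from digit $0$, so there is never an incoming carry, and write $y_0,\dots,y_3$ for the four digits in column $j$.
\begin{itemize}
\item A carry in the middle step with $h_j\ge 0$ gives $M-y_0=(M-y_1)+h_j\ge 2(M-y_1)$, hence $\tau(y_0)>\tau(y_1)$.
\item A borrow with $h_j<0$ gives $M-y_3=(M-y_2)+|h_j|\ge 2(M-y_2)$, hence $\tau(y_3)>\tau(y_2)$.
\end{itemize}
Same-class digits for this $\tau$ still differ by less than $M/2$, so your Steps 1--2 apply unchanged. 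There are only $O(\log M)$ classes per digit, so the total stays $N^{o(1)}$. You then get $\mathbf g=2\mathbf h$ and $\mathbf g\cdot\mathbf h=2|\mathbf h|^2\neq 0$. This is the paper's observation that $\|\mathbf u+t\mathbf v\|^2$ cannot be a quadratic symmetric about both $t=1/2$ and $t=5/2$ unless $\mathbf v=0$.
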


\begin{proof}
Let $M=2^{\lceil\sqrt{\log_2 N}\rceil}$ and $m=\lceil\log_M(N+1)\rceil$. For $n\in [N]$, write its standard base-$M$ expansion as $n=\sum_{j=0}^{m-1}x_j(n)M^j$.
Define $\tau(x):=\lfloor\log_2(M-x)\rfloor$, $\Psi(n):=\sum_{j=0}^{m-1}x_j(n)^2$, and color $n$ by $\psi_N(n):=\bigl(\tau(x_0(n)),\ldots,\tau(x_{m-1}(n)),\Psi(n)\bigr)$. The total number is $O((\log M)^mmM^2)=N^{o(1)}$.

We first prove that the carries are consistent. Suppose $y_0,y_1,y_2,y_3$ form a $4$-AP in $\Z/M\Z$ and $\tau(y_0)=\tau(y_1)$, $\tau(y_2)=\tau(y_3)$. If the step size $s> M/2$, then we can reverse the order as in the proof of Lemma~\ref{lem:one-digit}. So we only need to focus on $s\le M/2$. A carry in the first or third addition would place the corresponding pair on opposite sides of $M/2$, so $\tau(y_0)\ne\tau(y_1)$ or $\tau(y_2)\ne\tau(y_3)$. If only the middle addition carried, then $s=y_1-y_0\ge M-y_1$, so $M-y_0\ge 2(M-y_1)$, and thus $\tau(y_0)\ge \tau(y_1)+1$, leading to contradiction.

Now let $n_0,n_1,n_2,n_3$ be a $4$-AP with color pattern $AABB$. The carry consistency established above yields a digitwise $4$-AP $\mathbf x(n_i)=\mathbf u+i\mathbf v$, where $i\in\{0,1,2,3\}$, for some $\mathbf u,\mathbf v\in\mathbb Z^m$. Let $f(t):=\Vert\mathbf u\Vert^2+2t\langle \mathbf u,\mathbf v\rangle+t^2\Vert\mathbf v\Vert^2$, which is a quadratic function. However, the AABB pattern indicates that $f$ has two axes of symmetry, $1/2$ and $5/2$, therefore $f$ must be a constant function, and thus $\mathbf v=0$. Consequently, every $4$-AP with color pattern $AABB$ is trivial.
\end{proof}

\begin{proof}[Proof of Corollary~\ref{cor:erdos}]
Taking the product of the coloring from Corollary~\ref{cor:interval} with $k=4$ and the colorings in Lemmas~\ref{lem:behrend-coloring-1}, \ref{lem:behrend-coloring-2}, and \ref{lem:behrend-coloring-3} uses $N^{1/4+o(1)}$ colors.
\end{proof}

\section{Consequences for Fourier-uniform sets}
Deng, Tidor, and Zhao~\cite{DTZ} proved that an $r$-coloring of $\Z/M\Z$ without symmetrically colored $4$-APs implies
\begin{equation}\label{eq:dtz-reduction}
    \rho_4(\alpha)=O_{r,M}(
    \alpha^{3+\frac12\log_r M})\;,
\end{equation}
for $0<\alpha<1/2$; see~\cite[Theorem~1.8]{DTZ}. They also proved that an $r$-coloring of $\Z/M\Z$ without symmetrically colored $k$-APs implies
\begin{equation}\label{eq:dtz-reduction2}
    \rho_k(\alpha)
    =O_{r,M,k,\varepsilon}\!\left(
      \alpha^{\,k-1+(\log_r M)/(k-1)-\varepsilon}
    \right)\;,
\end{equation}
for even $k\ge 4$ and $0<\alpha<1$; see \cite[Proposition~2.4 and Theorem~2.8]{DTZ}.

\begin{proof}[Proof of Corollary~\ref{cor:rho}]
By Theorem~\ref{thm:cyclic}, the exponent in~\eqref{eq:dtz-reduction} can be taken to be
\[
    3+\frac12\log_{27p}(p^4)
    =3+\frac{2\log p}{\log p+\log27}\;.
\]
This tends to $5$ as $p\to\infty$.
\end{proof}

\begin{proof}[Proof of Corollary~\ref{cor:gowers}]
Applying~\eqref{eq:dtz-reduction2} with $M=p^{k^2/4}$ and $r=(k-1)^{k^2/4-1}p$ gives that for every even $k\geq4$ and every $\varepsilon>0$, the exponent can be taken to be
\[
k-1+\frac{\frac{k^2}{4}\log p}{(k-1)\left(\left(\frac{k^2}{4}-1\right)\log(k-1)+\log p\right)}-\varepsilon\;.
\]
As $p\to\infty$, this tends to
\[
k+\frac{(k-2)^2}{4(k-1)}-\varepsilon\;.
\]
Choosing $\varepsilon<\frac{(k-2)^2}{4(k-1)}$ makes the exponent larger than the random exponent $k$. This provides the first counterexamples to Gowers' conjectured lower bound \cite[Conjecture~4.2]{Gowers} for every even $k\ge6$.
\end{proof}

\section*{Acknowledgements}
The key construction idea for $4$-APs was suggested during an interaction with OpenAI's GPT-5.6 Sol. The authors take full responsibility for all statements and proofs. The authors thank Mingyang Deng, Jonathan Tidor and Yufei Zhao for developing the foundational reduction framework on which this work builds.
RS thanks Zach Hunter for pointing out a related paper that uses a very similar iterated norm coloring. RS thanks Baitian Li for noting, via the Chevalley–Warning theorem, that constant-degree polynomial methods can yield at most a constant-order improvement. RS also thanks Timothy Gowers for kindly sharing his thoughts via email.

\bibliographystyle{alpha}
\bibliography{ref}

\end{document}